\newcommand{\g}{\geqslant}
\newcommand{\RR}{\mathbb{R}}
\newcommand{\ZZ}{\mathbb{Z}}
\newcommand{\CC}{\mathbb{C}}
\newcommand{\s}{\mathcal{S}}
\newcommand{\p}{\partial}
\newcommand{\les}{\leqslant}
\newcommand{\lesa}{\lesssim}
\newcommand{\supp}{\text{supp }\,}
\newcommand{\mc}[1]{\mathcal{#1}}
\newcommand{\eref}[1]{(\ref{#1})}
\DeclareSymbolFont{bbold}{U}{bbold}{m}{n}
\DeclareSymbolFontAlphabet{\mathbbold}{bbold}
\theoremstyle{plain}
\newtheorem{theorem}{Theorem}
\newtheorem{proposition}[theorem]{Proposition}
\newtheorem{lemma}[theorem]{Lemma}
\newtheorem{corollary}[theorem]{Corollary}
\theoremstyle{remark}
\newtheorem{remark}{Remark}
\begin{document}
\title[The CSD equations in the Coulomb Gauge]{A note on the Chern-Simons-Dirac equations in the Coulomb Gauge}

\author{Nikolaos Bournaveas}
\address{Department of Mathematics, University of Edinburgh, Edinburgh EH9 3JE, United Kingdom}
\email{N.Bournaveas@ed.ac.uk}

\author{Timothy Candy}
\address{Department of Mathematics, Imperial College London, London SW7 2AZ, United Kingdom}
\email{T.Candy@imperial.ac.uk}

\author{Shuji Machihara}
\address{Department of Mathematics, Faculty of Education, Saitama University, 255 Shimo-Okubo, Sakura-ku, Saitama City 338-8570, Japan}
\email{matihara@mail.saitama-u.ac.jp}

%\author[Nikolaos Bournaveas, Timothy Candy, and Shuji Machihara]{
%Nikolaos Bournaveas \vspace{0.3cm}\\
%    \small{D\lowercase{epartment} \lowercase{of} M\lowercase{athematics}, U\lowercase{niversity} \lowercase{of} E\lowercase{dinburgh}\\
%    E\lowercase{dinburgh} EH9 3JE, U\lowercase{nited} K\lowercase{ingdom} \vspace{0.6cm}\\}
%Timothy Candy \vspace{0.3cm} \\
%    \small{
%    D\lowercase{epartment} \lowercase{of} M\lowercase{athematics}, I\lowercase{mperial} C\lowercase{ollege} L\lowercase{ondon}\\
%    L\lowercase{ondon} SW7 2AZ, U\lowercase{nited} K\lowercase{ingdom} \vspace{0.6cm}\\}
%Shuji Machihara \vspace{0.3cm}\\
%    \small{D\lowercase{epartment}  \lowercase{of} M\lowercase{athematics}, F\lowercase{aculty} \lowercase{of} E\lowercase{ducation}, S\lowercase{aitama} U\lowercase{niversity},\\
%    255 S\lowercase{himo}-O\lowercase{kubo}, S\lowercase{akura}-\lowercase{ku}, S\lowercase{aitama} C\lowercase{ity} 338-8570, J\lowercase{apan} }}

\thanks{2010 \textit{Mathematics Subject Classification.} Primary: 35Q41, 35A01. }
\thanks{
\textit{Key words and phrases}. Chern-Simons-Dirac, well-posedness, bilinear estimates,
Coulomb gauge.}

\date{\today}

\begin{abstract}
We prove that the Chern-Simons-Dirac equations in the Coulomb gauge are locally well-posed from initial data in $H^s$ with $s>\frac{1}{4}$. To study nonlinear Wave or Dirac equations at this regularity generally requires the presence of null structure.  The novel point here is that we make no use of the null structure of the system. Instead we exploit the additional elliptic structure in the Coulomb gauge together with the bilinear Strichartz estimates of Klainerman-Tataru.
\end{abstract}

\maketitle

\section{Introduction}

Chern-Simons gauge theories form an important component of the relativistic theory of planar physics. In particular they are used to model physical phenomena such as the fractional quantum hall effect, and have been well studied by physicists, see for instance \cite{Lopez1991, Deser1982, Cho1992} and the references therein. Mathematically, Chern-Simons terms were first introduced in \cite{Chern1974} in connection with certain geometric invariants. More recently a number of results have appeared studying the properties of various partial differential equations arising in connection with Chern-Simons theories, for instance the Chern-Simons-Higgs equations \cite{Bournaveas2009a, Huh2011, Selberg2012}, the Chern-Simons-Schr\"{o}dinger  equations  \cite{Liu2012, Smith2013}, and the Chern-Simons-Dirac equations \cite{Bournaveas2012a, Cho1992, Huh2007a, Huh2012}.\\

In the current article we study the local well-posedness of the Chern-Simons-Dirac (CSD) equations which are given by
        \begin{equation} \label{eqn CSD short}
                \begin{split}
                    i \gamma^\mu D_\mu \psi &= m \psi \\
                        \frac{1}{2} \epsilon^{\mu \nu \rho} F_{\nu \rho} &= -  J^{\mu}
                \end{split}
        \end{equation}
where the unknowns are the spinor $\psi: \RR^{1+2} \longrightarrow \CC^2$, and the gauge $A_{\mu}: \RR^{1+2} \longrightarrow \RR$, $\mu=0, 1, 2$. Repeated indices are summed over $\mu=0, ..., 2$ and raised and lowered using the Minkowski metric $g = \text{ diag }(1, -1, -1)$, $\epsilon^{\mu \nu \rho}$ is the completely anti-symmetric tensor with $\epsilon^{012}=1$,   $D_\mu = \p_\mu - i A_\mu$ is the covariant derivative, and $F_{\nu \rho} = \p_\nu A_\rho - \p_\rho A_\nu$ denotes the curvature of the connection $A_\mu$. The equations are coupled using the Dirac current $J^\nu = \overline{\psi} \gamma^{\nu} \psi$ where $\overline{\psi}= \psi^\dagger \gamma^0$ is the Dirac adjoint, and $\psi^\dagger$ denotes the conjugate transpose. The Gamma matrices $\gamma^\mu$ are $2 \times 2$ complex matrices which satisfy the relations
            $$ \gamma^\mu \gamma^\nu + \gamma^\nu \gamma^\mu = 2 g^{\mu \nu}I_{2\times 2}, \qquad \big(\gamma^j\big)^\dagger =  - \gamma^j, \qquad \big(\gamma^0\big)^\dagger =  \gamma^0.$$
We take the representation
        $$ \gamma^0 = \begin{pmatrix} 1 & 0\\ 0 & -1 \end{pmatrix}, \qquad \gamma^1 = \begin{pmatrix} 0 & i \\ i & 0 \end{pmatrix}, \qquad \gamma^2=
        \begin{pmatrix}
          0& 1 \\ -1& 0
        \end{pmatrix}. $$
\\
The CSD equations are derived from the Lagrangian
        $$ \mc{L}_{CSD} = \frac{1}{4} \epsilon^{\mu \nu \rho} F_{\mu \nu} A_\rho + \overline{\psi}\big( \gamma^\mu D_\mu - m \big) \psi$$
and solutions $(\psi, A_\mu)$ are \emph{gauge invariant}. Namely if $(\psi, A_\mu)$ is a solution, then $(\psi e^{i \theta}, A_{\mu} + \p_\mu \theta)$ is also a solution for any sufficiently regular map $\theta: \RR^{1+2} \rightarrow \RR$. Thus to obtain a well-posed Cauchy problem, we need to couple the CSD system (\ref{eqn CSD short}) with a choice of gauge. Common choices are the Coulomb gauge $\p_1 A_1 + \p_2 A_2=0$, the Lorenz gauge $\p^\mu A_\mu =0$, and the Temporal gauge $A_0 =0$.

Solutions to the CSD equation also satisfy conservation of charge
        \begin{equation}\label{eqn conservation of charge} \| \psi(t) \|_{L^2_x} = \| \psi(0) \|_{L^2_x} \end{equation}
and if $m=0$, are invariant under the rescaling $(\psi, A_{\mu})(t, x) \mapsto \lambda \big( \psi, A_\mu\big)( \lambda t, \lambda x)$. This rescaling leaves the $L^2$ norm unchanged, and so the CSD equation is \emph{charge critical}. Thus ideally we would like to prove local well-posedness from initial data in $L^2$. This would be particulary interesting in view of the conservation of charge (\ref{eqn conservation of charge}). \\

Recently the local and global well-posedness of Chern-Simons systems has received considerable attention, see for instance \cite{Bournaveas2009a, Bournaveas2012a, Huh2011, Huh2010, Huh2007, Huh2012, Liu2012, Selberg2012}. In particular, it was shown by Huh-Oh \cite{Huh2012} that if we couple the Chern-Simons-Dirac equations with the Lorenz gauge condition $\p_\mu A^\mu=0$, then we have local well-posedness for initial data in $H^s$ with $s>\frac{1}{4}$. This improved earlier work of Huh \cite{Huh2007a} where local well-posedness was obtained for $s>\frac{1}{2}$ in the Coulomb gauge, $s>\frac{5}{8}$ Lorenz gauge, and $s>\frac{3}{4}$ in the Temporal gauge.

 A crucial component in the proof of local well-posedness of Huh-Oh in \cite{Huh2012} was the presence of \emph{null structure}. Here null structure refers to the fact that, from the point of view of bilinear estimates, the nonlinear terms in \eref{eqn CSD short} behave better than generic bilinear terms such as $|\psi|^2$. More precisely, if we consider a nonlinear wave equation of the general form
        \begin{equation}\label{eqn intro wave without null structure} \Box u = u \nabla u \end{equation}
 then in general, we have ill-posedness if $s<\frac{3}{4}$ due to the counterexamples of Lindblad \cite{Lindblad1996}. On the other hand, if we replace the nonlinearity $ u \nabla u$ with a null form such as $Q_{ij}( |\nabla|^{-1}u, u)$ where
        $$ Q_{ij}(u, v)=\p_i u \p_j v - \p_j u \p_i v,$$
then we have well-posedness for $s>\frac{1}{4}$, see for instance \cite{Klainerman2002}. Note that the nonlinearities $u \nabla u$ and $Q_{ij}(|\nabla|^{-1} u, u)$ are roughly of the same ``strength'' in terms of derivatives. Now if we write the CSD equations as a system of nonlinear wave equations, then schematically the CSD equations are of the form \eref{eqn intro wave without null structure}. Thus, at least at first glance, it appears that null structure is essential to get LWP below $\frac{3}{4}$. \\

In the current article we show that, if we couple the system (\ref{eqn CSD short}) with the Coulomb Gauge condition
        \begin{equation}\label{Coulomb gauge condition} \p_1 A_1 + \p_2 A_2=0, \end{equation}
then LWP holds for $s>\frac{1}{4}$. This extends the recent results of Huh-Oh from the Lorenz gauge to the Coulomb gauge. The advantage of the Coulomb gauge is that \emph{no null structure is needed}. This is somewhat surprising in light of the schematic form of the CSD equation, and the counterexamples of Lindblad mentioned above.\\

Our main result is the following.

\begin{theorem}
  \label{main thm intro}
Let $\frac{1}{4}<s<1$ and assume $\psi(0) \in H^s$. Then there exists a $T=T(\| \psi(0)\|_{H^s}) >0$ and a solution $(\psi, A) \in C([0, T], H^s \times \dot{H}^{2s})$ to the CSD equations in the Coulomb gauge. Moreover the solution depends continuously on the initial data, and if we let $I=[0, T]$ then we have
        $$ \| \psi \|_{L^\infty_t H^s_x(I\times \RR^2)} + \| \gamma^\mu \p_\mu \psi \|_{L^1_t H^s_x(I\times \RR^2)} + \|A_\mu \|_{L^\infty_t \dot{H}^{2s}_x(I\times \RR^2)} \lesa \| \psi(0) \|_{H^s(\RR^2)}$$
and the solution is unique in this class.
\end{theorem}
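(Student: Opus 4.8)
The plan is to use the elliptic structure of the Coulomb gauge to eliminate the potential $A_\mu$ entirely, reducing the system to a single \emph{cubic} Dirac equation for $\psi$, and then to run a contraction argument in a Strichartz-type space in which the decisive nonlinear estimate is furnished by the Klainerman--Tataru bilinear bound for free waves in $2+1$ dimensions. Writing the Chern--Simons equations $\tfrac12\epsilon^{\mu\nu\rho}F_{\nu\rho}=-J^\mu$ component by component and combining them with $\p_1A_1+\p_2A_2=0$ turns the three equations into three \emph{elliptic} ones, solved by $A_1=\Delta^{-1}\p_2J^0$, $A_2=-\Delta^{-1}\p_1J^0$ and $A_0=\Delta^{-1}(\p_1J^2-\p_2J^1)$, where $J^\mu=\overline\psi\gamma^\mu\psi$ is quadratic in $\psi$ (in particular $J^0=|\psi|^2$). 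Substituting these into $i\gamma^\mu\p_\mu\psi=m\psi-\gamma^\mu A_\mu\psi$, multiplying by $-i\gamma^0$ and absorbing the lower-order mass term gives
$$ \big(\p_t+\mc P\big)\psi=\mc N(\psi,\psi,\psi),\qquad \mc P:=\gamma^0\gamma^j\p_j,\qquad \psi(0)=\psi_0\in H^s, $$
where $\mc P$ is first order with $\mc P^2=\Delta$ --- so after diagonalisation $e^{-t\mc P}$ splits into the half-wave propagators $e^{\pm it|\nabla|}$ --- and $\mc N$ is trilinear of the schematic shape $|\nabla|^{-1}(\overline\psi\,\Gamma\,\psi)\,\Gamma'\psi$ with constant matrices $\Gamma,\Gamma'$ (up to Riesz multipliers, harmless on Littlewood--Paley pieces). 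The essential point is that $\mc N$ carries \emph{no} null structure: the only gain over a generic cubic wave nonlinearity is the single factor $|\nabla|^{-1}$ produced by the elliptic gauge.

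For the linear theory I observe that the solution class in the statement \emph{coincides} with the natural iteration space: since $\gamma^\mu\p_\mu\psi=\gamma^0(\p_t+\mc P)\psi$, the inhomogeneous Strichartz estimate for $e^{-t\mc P}$ shows that any $\psi$ with $\psi(0)\in H^s$ and $(\p_t+\mc P)\psi\in L^1_tH^s_x$ automatically obeys every admissible wave--Strichartz bound at regularity $s$. I therefore work in
$$ \|\psi\|_{\mc X^s_T}:=\|\psi\|_{L^\infty_tH^s_x}+\|(\p_t+\mc P)\psi\|_{L^1_tH^s_x}+\big(\text{admissible Strichartz norms of }\psi\text{ at regularity }s\big), $$
for which $\|e^{-t\mc P}\psi_0\|_{\mc X^s_T}\lesa\|\psi_0\|_{H^s}$ and $\|\int_0^te^{-(t-t')\mc P}F\,dt'\|_{\mc X^s_T}\lesa\|F\|_{L^1_tH^s_x}$ are standard (the inhomogeneous bound follows from the homogeneous one and Minkowski's inequality, since the source sits in $L^1_t$).

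The crux is the trilinear estimate $\|\mc N(\psi,\psi,\psi)\|_{L^1_tH^s_x(I\times\RR^2)}\lesa T^\theta\|\psi\|_{\mc X^s_T}^3$ for some $\theta>0$. One cannot afford a crude bound through $\|A_\mu\|_{L^\infty_x}$ --- for $s\les\tfrac12$ the potential $A_\mu\in\dot H^{2s}$ is not even bounded --- so the full strength of bilinear Strichartz must be used. I would decompose all three inputs dyadically, diagonalise $\mc P$ so each piece is a half-wave of definite sign, and for the model term factor $L^1_t=L^2_t\cdot L^2_t$: estimate the quadratic factor $|\nabla|^{-1}(\overline{\psi_{\lambda_1}}\,\Gamma\,\psi_{\lambda_2})$ in $L^2_tL^\infty_x$ (or a nearby $L^2_t\dot W^{\sigma,r}_x$) via the Klainerman--Tataru bound, which in $2+1$ dimensions gives
$$ \|\psi_{\lambda_1}\psi_{\lambda_2}\|_{L^2_{t,x}}\lesa \lambda_{\min}^{1/2}\Big(\tfrac{\lambda_{\min}}{\lambda_{\max}}\Big)^{1/2-\epsilon}\|\psi_{\lambda_1}\|_{\mc X^0}\|\psi_{\lambda_2}\|_{\mc X^0} $$
for every choice of signs (transversality when the signs differ, cone separation when they agree), and which transfers from free waves to $\mc X^0$ by Duhamel and Minkowski; then estimate the remaining factor $\psi_{\lambda_3}$ in the complementary $L^2_tL^{r'}_x$ by ordinary Strichartz. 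The off-diagonal factor $(\lambda_{\min}/\lambda_{\max})^{1/2-\epsilon}$, together with the $|\nabla|^{-1}$, is exactly what allows the resulting triple dyadic sum to converge once the $H^s$ weights are redistributed; tracking the exponents shows the sum closes precisely when $s>\tfrac14$. The dangerous regime is high$\,\times\,$high$\,\to\,$low, where $|\nabla|^{-1}$ is large --- but this is also where the Klainerman--Tataru gain is strongest; the low$\,\times\,$high and comparable-frequency regimes are easier. The factor $T^\theta$ comes from H\"older in time on the bounded interval, with room left because $s>0$ makes the problem subcritical.

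Finally, the estimates above make $\Phi(\psi):=e^{-t\mc P}\psi_0+\int_0^te^{-(t-t')\mc P}\mc N(\psi)\,dt'$ a contraction on a ball of $\mc X^s_T$ for $T=T(\|\psi_0\|_{H^s})$ small, yielding the solution, its Lipschitz --- hence continuous --- dependence on $\psi_0$, and (since the class of the statement \emph{is} $\mc X^s_T$) uniqueness in that class. Recovering $A_\mu$ from $\psi$ by the elliptic formulas above, one has $\||\nabla|^{2s}A_\mu(t)\|_{L^2_x}\sim\||\nabla|^{2s-1}(\overline\psi\psi)(t)\|_{L^2_x}\lesa\|\psi(t)\|_{H^s_x}^2$ by the Sobolev product law $\dot H^s\cdot\dot H^s\hookrightarrow\dot H^{2s-1}$ on $\RR^2$, which holds precisely for $0<s<1$ --- this is where the upper restriction $s<1$ enters --- so $A\in C([0,T],\dot H^{2s})$. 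The main obstacle is the trilinear estimate above: the nonlinearity is no better than Lindblad's ill-posed $u\nabla u$ apart from one derivative of elliptic smoothing, so there is essentially no slack, and reaching $s>\tfrac14$ requires extracting the full bilinear Strichartz gain --- the $\lambda_{\min}/\lambda_{\max}$ factor included --- simultaneously in every frequency interaction.
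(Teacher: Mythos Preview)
Your overall architecture matches the paper exactly: reduce via the Coulomb-gauge elliptic equations to a cubic Dirac equation, iterate in the Duhamel space $\|\psi\|_{L^\infty_tH^s}+\|\gamma^\mu\p_\mu\psi\|_{L^1_tH^s}$ (your $\mc X^s_T$ is the paper's $Y^s_T$, since the extra Strichartz norms you append are controlled by $Y^s_T$ via transference), use Klainerman--Tataru for the nonlinear estimate, and recover $A_\mu\in\dot H^{2s}$ from the product law $\dot H^s\cdot\dot H^s\hookrightarrow\dot H^{2s-1}$.

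The genuine difference is in how the trilinear bound is executed. You propose a full three-factor dyadic decomposition, quoting a frequency-localised bilinear $L^2_{t,x}$ estimate with an off-diagonal factor $(\lambda_{\min}/\lambda_{\max})^{1/2-\epsilon}$ and then summing. The paper avoids the triple dyadic sum entirely: it proves two clean \emph{non-localised} bounds on the bilinear piece, namely $\|N(\psi_1,\psi_2)\|_{L^2_tL^\infty_x}\lesa\|f_1\|_{H^s}\|f_2\|_{H^s}$ and $\||\nabla|^{s+1/2}N(\psi_1,\psi_2)\|_{L^4_tL^2_x}\lesa\|f_1\|_{H^s}\|f_2\|_{H^s}$ (both direct from the Klainerman--Tataru proposition, the first via Sobolev embedding with $r$ large), and then handles the $H^s$ derivative on the product $N\cdot\psi_3$ by a modified Leibniz rule
$$\|fg\|_{\dot H^s}\lesa\|f\|_{L^\infty}\|g\|_{\dot H^s}+\|f\|_{\dot H^{s+1/2}}\||\nabla|^{-1/2}g\|_{L^\infty},$$
pairing the two terms with $\psi_3$ in $L^\infty_tL^2_x$ and in the linear Strichartz space $L^4_tL^\infty_x$ respectively. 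This buys a much shorter argument with no case analysis on frequency interactions. Your route should also close, but as written it is a sketch: you target $L^2_tL^\infty_x$ for $N$ yet quote an $L^2_{t,x}$ bilinear bound, and the specific off-diagonal exponent $\tfrac12-\epsilon$ you state for the $(+,+)$ $L^2_{t,x}$ estimate in $2+1$ dimensions is not what Klainerman--Tataru directly gives (their gain is encoded in the admissible range of the smoothing exponent $a$, not as a high--low factor in $L^2_{t,x}$). If you carry out your programme you will in effect be reproving the product rule above by hand; the paper's packaging is the cleaner way to organise the same information.
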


\begin{remark}
  In the result of Huh \cite{Huh2007a}, local well-posedness in the Coulomb gauge was obtained under the conditions $\psi \in H^{\frac{1}{2} + \epsilon}$ and $A_{\mu}(0) \in L^2_x$ where the initial data should satisfy the constraints
        $$ \p_1 A_1 + \p_2 A_2=0, \qquad \p_1 A_2 - \p_2 A_1 = J^0.$$
  This is in contrast to Theorem \ref{main thm intro} where we only provide initial data for the spinor $\psi$. This apparent ambiguity is reconciled by the fact that the initial data for $\psi$, completely determines $A_\mu(0)$ via the constraint equations. Thus there is no need to specify data for the gauge $A_\mu(0)$. See Section \ref{sec structure of csd} below.
\end{remark}

The key observation in the proof of Theorem \ref{main thm intro} is that, the equations for the gauge, coupled with the Coulomb gauge condition, mean that $A_\mu$ satisfies elliptic equations of the form
            $$ \Delta A_\mu = \nabla \psi^2.$$
Note that this is peculiar to the Chern-Simons action, if we have instead couple the Dirac equation with the Maxwell equations, then in the Coulomb gauge we only have an elliptic equation for a \emph{component} of the gauge $A_\mu$. On the other hand, the Chern-Simons action gives sufficiently good control over the curvature of the gauge $A_\mu$, that we have an elliptic equation for the \emph{whole} gauge. The proof is completed by using the bilinear Strichartz estimates of Klainerman-Tataru \cite{Klainerman1999}.

\section{Elliptic Structure}\label{sec structure of csd}

We start by examining the equations for the gauge $A_\mu$, for this we need a little preliminary notation.  Define the ``curl'' $\nabla^\bot= (-\p_2, \p_1)$ and recall the identity
        \begin{equation}\label{eqn laplacian decomp into curl + div} \Delta B = \nabla\big( \nabla \cdot B \big) + \nabla^\bot \big( \nabla^\bot \cdot B \big)\end{equation}
where $B: \RR^2 \rightarrow \CC^2$. Define the projections $\mc{P}_{cf}$, $\mc{P}_{df}$ by
    $$ \mc{P}_{cf} B = \frac{1}{\Delta} \nabla \big( \nabla \cdot B\big), \qquad \mc{P}_{df} B = \frac{1}{\Delta} \nabla^\bot \big( \nabla^\bot \cdot  B \big). $$
It is easy to see that $\mc{P}_{cf} $ and $\mc{P}_{df}$ are orthogonal projections on $L^2(\RR^2)$ and $\nabla \cdot \mc{P}_{df} =  \nabla^\bot \cdot \mc{P}_{cf} = 0$. Let $A=(A_1, A_2)$ denote the spatial component of the gauge $A_\mu$. Then the gauge equations in (\ref{eqn CSD short}) can be written as
        \begin{align*}
          \p_t A - \nabla A_0 &= N\\
            \nabla^\bot A &= -J^0
        \end{align*}
with $N=( - J^2, J^1)^T$. Decompose $A= \mc{P}_{cf} A + \mc{P}_{df} A = A^{cf} + A^{df}$ into divergence free and curl free components. Then the previous equations are equivalent to
    \begin{align*}
       \p_t A^{cf} - \nabla A_0 &= \mc{P}_{cf} N \\
                             \Delta A^{df} &= - \nabla^\bot J_0.
    \end{align*}
Note that, unlike in the Maxwell or Yang-Mills gauge theories, we have an elliptic component \textit{independent of the choice of gauge}. If now enforce the Coulomb gauge condition
        $$ \nabla \cdot A = \nabla \cdot A^{cf} = 0$$
we see that we must have $A^{cf} = 0$ and therefore, the equations for the gauge $(A_0, A)$ are
        \begin{align*}
           \nabla A_0 &= - \mc{P}_{cf} N  \\
           \Delta A^{df} &= - \nabla^\bot J^0.
        \end{align*}
Taking $\nabla^\bot$ of both sides of the equation for $A_0$, and adding the equations for the spinor $\psi$, we see that the CSD equations in the Coulomb gauge are
     \begin{equation}\label{eqn CSD in Coulomb gauge} \begin{split}
      i \gamma^\mu \p_\mu \psi &= m \psi - A_\mu \gamma^\mu \psi\\
       \Delta A_0 &=  \p_1 J_2 - \p_2 J_1 \\
                             \Delta A^{df} &= - \nabla^\bot J^0 \\
                             A^{cf} &= 0.
      \end{split}\end{equation}

\section{Proof of Well-posedness}

Here we prove Theorem \ref{main thm intro}. By taking the equations for the gauge $A_\mu$, and substituting them into the Dirac component, we see that to prove Theorem \ref{main thm intro}, it is enough to prove well-posedness for the cubic Dirac equation
        \begin{equation}  \label{CSD schematically} \begin{split}
                i \gamma^\mu \p_\mu \psi &= m \psi - N(\psi, \psi) \psi\\
                            \psi(0) &= f
                \end{split}
        \end{equation}
where $N$ is the bilinear operator
    \begin{equation}\label{eqn defn of B} N(\psi, \phi) = \frac{1}{\Delta} \Big[ \Big( \p_1 (\overline{\psi} \gamma^2 \phi ) - \p_2 ( \overline{\psi} \gamma^1 \phi) \Big) \gamma^0 + \p_2 (\overline{\psi} \gamma^0 \phi) \gamma^1  - \p_1( \overline{\psi} \gamma^0 \phi) \gamma^2 \Big]. \end{equation}
Once we have the solution $\psi$ to (\ref{CSD schematically}), we then reconstruct the gauge $A_\mu$ by solving the elliptic equations
    \begin{equation}\label{eqn reconstruct gauge}
   \begin{split}
      \Delta A_0 &=  \p_1 J_2 - \p_2 J_1 \\
                             \Delta A^{df} &= - \nabla^\bot J_0 \\
                             A^{cf} &= 0.
   \end{split}
   \end{equation}

The proof of local well-posedness for (\ref{CSD schematically}) will rely on the following bilinear refinement of the classical Strichartz estimates for the wave equation due to Klainerman-Tataru  \cite{Klainerman1999}.

\begin{proposition}[\cite{Klainerman1999}]\label{prop bilinear strichartz}
  Let $u= e^{ it |\nabla| } f$, $v= e^{\pm i t|\nabla| } g$ and $\frac{1}{q} + \frac{1}{2r} = \frac{1}{2}$ with $r<\infty$. Then
      $$ \big\| |\nabla|^{-a} \big( uv\big) \big\|_{L^q_t L^r_x} \lesa \| f \|_{\dot{H}^{s} } \| g \|_{\dot{H}^{s}}$$
  provided $s = \frac{3}{4}\big( 1 - \frac{1}{r}\big) - \frac{a}{2}$ and $0 \les a < 1 - \frac{1}{r}$.
\end{proposition}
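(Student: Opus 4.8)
This estimate is, in essence, the bilinear Strichartz inequality of Klainerman--Tataru \cite{Klainerman1999}, and the plan is to recover precisely this $L^q_tL^r_x$ formulation from its $L^2_{t,x}$ version by Littlewood--Paley decomposition, interpolation, and a dyadic summation. The hypotheses $r<\infty$ and $0\les a<1-\frac1r$ mark exactly the region in which a genuine bilinear gain is available and summable, and they are also what keeps the argument clear of the endpoint linear Strichartz estimate $L^4_tL^\infty_x$, which fails in two space dimensions.

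\textbf{Step 1: frequency localization.} Decompose $f=\sum_\lambda f_\lambda$, $g=\sum_\mu g_\mu$ dyadically and set $u_\lambda=e^{it|\nabla|}f_\lambda$, $v_\mu=e^{\pm it|\nabla|}g_\mu$. By symmetry assume $\lambda\les\mu$, and split into the high-low regime $\lambda\ll\mu$, where $u_\lambda v_\mu$ is Fourier-supported in $|\xi|\sim\mu$ so $|\nabla|^{-a}$ contributes the factor $\mu^{-a}$, and the high-high regime $\lambda\sim\mu$, where one further splits the output into dyadic pieces of frequency $\nu\les\mu$ and $|\nabla|^{-a}$ contributes $\nu^{-a}$. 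In each regime the target is a frequency-localized bound $\||\nabla|^{-a}(u_\lambda v_\mu)\|_{L^q_tL^r_x}\lesa \nu^{-a}(\lambda\mu)^{s}(\text{gain})\,\|f_\lambda\|_{L^2}\|g_\mu\|_{L^2}$ carrying a positive power of $\lambda/\mu$ (high-low) or $\nu/\mu$ (high-high); this power is exactly what is needed to sum the dyadic series and reassemble $\|f\|_{\dot H^s}\|g\|_{\dot H^s}$.

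\textbf{Step 2: the localized bilinear bound by interpolation.} For the frequency-localized estimate I would interpolate three elementary inputs. First, the sharp bilinear $L^2_{t,x}$ estimate for free waves of \cite{Klainerman1999}: scaling forces its exponents to satisfy $\sigma_1+\sigma_2=\frac12-a$ in $\RR^{1+2}$, and the cone geometry furnishes the sharp split together with a power $(\lambda/\mu)^\delta$ (resp.\ $(\nu/\mu)^\delta$), $\delta>0$, whenever $a<\frac12$. Second, the product of two classical (linear) Strichartz estimates for the two-dimensional wave equation via H\"older, $\|u_\lambda v_\mu\|_{L^{q_1}_tL^{r_1}_x}\les\|u_\lambda\|_{L^{2q_1}_tL^{2r_1}_x}\|v_\mu\|_{L^{2q_1}_tL^{2r_1}_x}$, which supplies the off-diagonal leverage in the pair $(q,r)$. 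Third, the trivial bounds $\|u_\lambda v_\mu\|_{L^\infty_{t,x}}\lesa\lambda\mu\|f_\lambda\|_{L^2}\|g_\mu\|_{L^2}$ (Cauchy--Schwarz on the Fourier side) and $\|u_\lambda v_\mu\|_{L^\infty_tL^2_x}\lesa\lambda\|f_\lambda\|_{L^2}\|g_\mu\|_{L^2}$ (Bernstein and energy conservation). Interpolating these and tracking the powers of $\lambda$, $\mu$, $\nu$, one checks that the resulting family lies exactly on the line $\frac1q+\frac1{2r}=\frac12$ with regularity $s=\frac34(1-\frac1r)-\frac a2$ --- this relation is forced by scaling --- and that the residual power of the frequency ratio is a strictly positive convex combination of the gains above precisely when $0\les a<1-\frac1r$ and $r<\infty$. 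Summing the resulting geometric series over $\lambda/\mu$ and $\nu/\mu$ then closes the estimate.

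\textbf{Main obstacle.} The real content sits in the sharp bilinear $L^2_{t,x}$ estimate of \cite{Klainerman1999}, which is governed by the geometry of the light cone: one must control the convolution of the surface measures on two frequency-truncated cones, and in the same-sign case $v=e^{+it|\nabla|}g$ these two cones can be tangent along a ray, so the available gain in $\lambda/\mu$ is weaker than in the opposite-sign case --- in $\RR^{1+2}$ the cone carries only one non-vanishing principal curvature, which is also the reason the endpoint linear Strichartz estimate $L^4_tL^\infty_x$ fails in two dimensions. Both the interpolation of Step 2 (which must steer around that failing endpoint) and the dyadic summation (whose gains degenerate there) break down exactly at $r=\infty$ and at $a=1-\frac1r$, which is why these cases are excluded. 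Once the $L^2_{t,x}$ estimate of \cite{Klainerman1999} is granted, the remaining difficulty is purely the bookkeeping of Steps 1--2.
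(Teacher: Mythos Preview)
The paper does not give a proof of this proposition at all: it is stated with attribution to Klainerman--Tataru \cite{Klainerman1999} and then used as a black box (the next line is ``This has the following useful consequence'', leading into Corollary~\ref{Cor estimate for B}). So there is no ``paper's own proof'' to compare your proposal against.

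That said, your sketch is a faithful outline of the standard route to such bilinear Strichartz estimates: dyadic decomposition into high--low and high--high interactions, the sharp bilinear $L^2_{t,x}$ estimate on the cone as the core input, interpolation against products of linear Strichartz bounds and trivial Bernstein/energy bounds, and summation of the resulting geometric series using the off-diagonal gain. You also correctly identify that the scaling relation forces $s=\tfrac34(1-\tfrac1r)-\tfrac a2$ and that the restrictions $r<\infty$ and $a<1-\tfrac1r$ are exactly what is needed for the dyadic sums to converge. One small caution: you say the linear $L^4_tL^\infty_x$ Strichartz estimate ``fails in two dimensions'', but later in the paper (in the proof of \eqref{eqn main homogeneous est}) the authors explicitly invoke ``the linear $L^4_tL^\infty_x$ Strichartz estimate'' to bound $\||\nabla|^{-1/2}\psi_3\|_{L^4_tL^\infty_x}$; since $s>\tfrac14$ they only need a subcritical version of it, so there is no contradiction, but you should not lean on its outright failure as the explanation for excluding $r=\infty$.
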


This has the following useful consequence.

\begin{corollary}\label{Cor estimate for B}
  Let $\frac{1}{4}<s \les \frac{1}{2} $ and $I\subset \RR$ with $|I|<\infty$. Let $B$ be as in (\ref{eqn defn of B}) and assume that $\psi = e^{\pm it |\nabla|} f$, $\phi = e^{\pm it |\nabla|} g$. Then
        $$ \| N(\psi, \phi) \|_{L^2_t L^\infty_x(I \times \RR^2)} + \big\| |\nabla|^{s+\frac{1}{2}} N(\psi, \phi) \big\|_{L^4_t L^2_x(I \times \RR^2)} \lesa \| f \|_{H^s} \| g \|_{H^s}.$$
\begin{proof}
To obtain the $L^4_t L^2_x$ bound we just note that an application of Proposition \ref{prop bilinear strichartz} with $q=4$, $r=2$ gives\footnote{Whenever we multiply two spinors together, i.e. $\psi \phi$, we really mean $\sum_{ i, j} \psi_i \phi_j$ where $\psi_i$, $\psi_j$ are the components of the spinor.}
        $$ \big\| |\nabla|^{s+\frac{1}{2}} N(\psi, \phi) \big\|_{L^4_t L^2_x (I \times \RR^2)} \lesa \big\| |\nabla|^{s-\frac{1}{2}} (\psi \phi) \big\|_{L^4_t L^2_x} \lesa \| f \|_{H^s} \| g \|_{H^s}. $$
On the other hand, for the $L^2_t L^\infty_x$ bound, we start by writing
        $$ N(\psi, \phi) = P_{<1} N(\psi, \phi) + P_{>1} N(\psi, \phi)$$
where $P_{<1}$ is the projection onto frequencies $|\xi| <1$. To deal with the  low frequency component we use the assumption $|I|<\infty$ to obtain
  \begin{align*} \| P_{<1} N( \psi,  \phi )  \|_{L^2_t L^\infty_x(I\times \RR^2)}
        &\lesa \sum_{\lambda\les 1} \lambda^2 \| P_{\lambda} N( \psi,  \phi) \|_{L^2_t L^1_x(I\times \RR^2)}  \\
        &\lesa_I \sum_{\lambda \les 1} \lambda  \| \psi\|_{L^\infty_t L^2_x} \| \phi\|_{L^\infty_t L^2_x} \\
        &\lesa_I \| f \|_{H^s} \| g \|_{H^s}
    \end{align*}
  where the sum is over dyadic $\lambda \in 2^{\ZZ}$, $\lambda \les 1$, and the $P_{\lambda}$ are the standard Littlewood-Paley projections onto frequencies $|\xi| \approx \lambda$.

    On the other hand, for the high frequency piece we use Sobolev embedding followed by an application of Holder in time to deduce that
        $$ \|  P_{> 1} N( \psi,  \phi) \|_{L^2_t L^\infty_x(I\times \RR^2)}
                \lesa \| |\nabla|^{-a} \big( \psi \phi\big) \|_{L^2_t L^r_x(I\times \RR^2)} \lesa \| |\nabla|^{-a} \big( \psi \phi\big) \|_{L^q_t L^r_x(I\times \RR^2)}$$
  where $a = 1 - \frac{3}{r} < 1 - \frac{2}{r}$ (so we can apply Sobolev embedding) and $q>2$ such that $\frac{1}{q} + \frac{1}{2r} = \frac{1}{2}$ where $r<\infty$ is to be chosen later. An application of Proposition \ref{prop bilinear strichartz} then gives
        $$  \|  P_{> 1} N( \psi,  \phi) \|_{L^2_t L^\infty_x(I\times \RR^2)} \lesa \| f \|_{H^{s'}} \|g \|_{H^{s'}}$$
  where
    $$s'= \frac{3}{4} \Big( 1- \frac{1}{r}\Big) - \frac{a}{2} = \frac{1}{4} + \frac{3}{4r}.$$
  Result now follows by taking $r$ sufficiently large.
\end{proof}
\end{corollary}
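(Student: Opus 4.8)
The plan is to reduce both halves of the estimate to the bilinear Strichartz bound of Proposition \ref{prop bilinear strichartz} applied to the product $\psi\phi$. The starting observation is that $N$ is, up to bounded constant matrix multipliers and the componentwise reading of spinor products, an operator of the form $\frac{1}{\Delta}\nabla$ acting on $\overline{\psi}\gamma^\mu\phi$; that is, $N$ carries a \emph{net} smoothing of one derivative. Consequently $|\nabla|^{s+\frac12}N(\psi,\phi)$ behaves like $|\nabla|^{s-\frac12}(\psi\phi)$ and $N(\psi,\phi)$ itself like $|\nabla|^{-1}(\psi\phi)$. Since the $\gamma^\mu$ are constant and bounded, these matrix factors do not affect any $L^p$ norms, so it suffices to bound the corresponding expressions in $\psi\phi$.

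For the $L^4_t L^2_x$ piece I would apply Proposition \ref{prop bilinear strichartz} directly with $q=4$, $r=2$ and $a=\frac12-s$; the hypothesis $0\les a<\frac12$ is then exactly $0<s\les\frac12$. The proposition produces the homogeneous exponent $s'=\frac34(1-\frac12)-\frac{a}{2}=\frac18+\frac{s}{2}$, and a short computation shows $s'\les s$ precisely when $s\g\frac14$. Since also $s'\g0$, the elementary Sobolev comparison $\|h\|_{\dot H^{s'}}\lesa\|h\|_{H^s}$ valid for $0\les s'\les s$ (controlling high frequencies by $\dot H^s$ and low frequencies by $L^2$) upgrades the bound to $\|f\|_{H^s}\|g\|_{H^s}$, as required.

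The genuinely delicate estimate is the $L^2_t L^\infty_x$ bound, because $L^\infty_x$ is the endpoint $r=\infty$ excluded from Proposition \ref{prop bilinear strichartz}. I would split $N=P_{<1}N+P_{>1}N$. For the low frequencies I would estimate each Littlewood--Paley piece $P_\lambda N$ with $\lambda\les1$ by Bernstein, trading the $L^\infty_x$ norm for $\lambda^2\|P_\lambda N\|_{L^1_x}$; using the one-derivative smoothing of $N$ and Hölder gives $\lambda\|\psi\|_{L^2_x}\|\phi\|_{L^2_x}$, which is \emph{summable} over dyadic $\lambda\les1$. Here the finiteness of $I$ converts $L^2_t$ bounds out of $L^\infty_t$ ones, and the unitarity of the free flow on $L^2$ turns $\|\psi\|_{L^\infty_t L^2_x}\|\phi\|_{L^\infty_t L^2_x}$ into $\|f\|_{L^2}\|g\|_{L^2}\les\|f\|_{H^s}\|g\|_{H^s}$. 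For the high frequencies I would pass from $L^\infty_x$ to $L^r_x$ with $r<\infty$ large via the strict Sobolev embedding $\dot W^{3/r,r}\hookrightarrow L^\infty$ (strict because $\tfrac{3}{r}>\tfrac{2}{r}$), which costs $a=1-\frac{3}{r}$ derivatives and reduces matters to $\||\nabla|^{-a}(\psi\phi)\|_{L^2_t L^r_x}$; a Hölder step in time, legitimate since $q=\frac{2r}{r-1}>2$ and $|I|<\infty$, replaces $L^2_t$ by $L^q_t$ so that Proposition \ref{prop bilinear strichartz} applies.

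The application of the proposition in the high-frequency regime yields the exponent $s'=\frac34(1-\frac1r)-\frac{a}{2}=\frac14+\frac{3}{4r}$, and this is where the threshold $s>\frac14$ is forced: one needs $s'\les s$, i.e. $\frac{3}{4r}\les s-\frac14$, which admits a finite $r$ if and only if $s>\frac14$. Choosing such an $r$, necessarily $r\g3$ so that $0\les a<1-\frac1r$ holds, and using $H^s\hookrightarrow\dot H^{s'}$ once more completes the high-frequency bound; adding it to the low-frequency and $L^4_t L^2_x$ contributions gives the corollary. I expect the main obstacle to be precisely this endpoint $L^\infty_x$ issue: balancing the Sobolev loss against the requirement $s'\les s$ is what both dictates taking $r$ large and pins the result to $s>\frac14$, while the low-frequency summation and the $\frac1\Delta$ bookkeeping are routine by comparison.
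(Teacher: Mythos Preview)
Your proposal is correct and follows essentially the same route as the paper: the $L^4_tL^2_x$ bound via Proposition~\ref{prop bilinear strichartz} with $(q,r)=(4,2)$, and the $L^2_tL^\infty_x$ bound via the $P_{<1}/P_{>1}$ split, Bernstein plus summation at low frequencies, and Sobolev embedding to $L^r_x$ followed by H\"older in time and Proposition~\ref{prop bilinear strichartz} at high frequencies. Your exponent bookkeeping (the computations of $s'$, the constraint $r\geqslant 3$ for $a\geqslant 0$, and the verification that $H^s\hookrightarrow\dot H^{s'}$) is in fact more explicit than the paper's.
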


 We also require the following version of the product rule for $H^s$.

\begin{proposition}\label{prop modified product rule}
   Let $s>0$ and $\alpha\g0$. Then
        \begin{equation}\label{eqn mod product rule} \| fg \|_{\dot{H}^s} \lesa \| f \|_{L^\infty} \| g \|_{\dot{H}^s} + \| f \|_{\dot{H}^{s + \alpha}} \big\|  |\nabla|^{-\alpha} g \big\|_{L^\infty}.\end{equation}
\end{proposition}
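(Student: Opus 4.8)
The plan is to run a standard paraproduct decomposition and reduce the estimate to summing Littlewood--Paley pieces; the only genuinely nontrivial interaction is the high--high (resonant) one, and that is where the hypothesis $s>0$ enters. Write $P_\lambda$ for the projection onto frequencies $|\xi|\approx\lambda$, $\lambda\in2^\ZZ$, and $P_{<\lambda}=\sum_{\mu<\lambda}P_\mu$; I will freely use the square-function characterization $\|u\|_{\dot H^s}^2\approx\sum_\lambda\lambda^{2s}\|P_\lambda u\|_{L^2}^2$ and the uniform $L^\infty$-boundedness of the $P_{<\lambda}$. Decompose
$$ fg=\sum_{\mu\ll\lambda}P_\lambda f\,P_\mu g\;+\;\sum_{\lambda\ll\mu}P_\lambda f\,P_\mu g\;+\;\sum_{\lambda\approx\mu}P_\lambda f\,P_\mu g\;=:\;\Pi_1+\Pi_2+\Pi_3, $$
and recall that $P_\nu\Pi_1$ and $P_\nu\Pi_2$ are nonzero only when the output frequency $\nu$ is comparable to the larger input frequency, whereas $P_\nu\Pi_3$ only forces $\nu\lesa\lambda$. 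Below, $P_{\approx\nu}$ denotes the sum of the boundedly many dyadic blocks at scale comparable to $\nu$.

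For $\Pi_2$ (low--high) I would write $P_\nu\Pi_2$ essentially as $P_\nu\big(P_{<\nu}f\cdot P_{\approx\nu}g\big)$, so that H\"older gives $\|P_\nu\Pi_2\|_{L^2}\lesa\|f\|_{L^\infty}\|P_{\approx\nu}g\|_{L^2}$; multiplying by $\nu^s$, squaring and summing in $\nu$ then produces $\|f\|_{L^\infty}\|g\|_{\dot H^s}$, the first term of \eref{eqn mod product rule}. For $\Pi_1$ (high--low) I would similarly reduce to $P_\nu\big(P_{\approx\nu}f\cdot P_{<\nu}g\big)$ and trade the low-frequency factor for a power of $\nu$: writing $P_{<\nu}g=|\nabla|^\alpha\widetilde P_{<\nu}\big(|\nabla|^{-\alpha}g\big)$ for a slightly fattened projection $\widetilde P_{<\nu}$, the operator $|\nabla|^\alpha\widetilde P_{<\nu}$ has multiplier equal to $\nu^\alpha$ times a rescaling of a fixed symbol with integrable kernel --- this is where $\alpha\g0$ is needed, so that the dyadic sum defining that kernel converges --- whence Bernstein's inequality gives $\|P_{<\nu}g\|_{L^\infty}\lesa\nu^\alpha\big\||\nabla|^{-\alpha}g\big\|_{L^\infty}$. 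Thus $\|P_\nu\Pi_1\|_{L^2}\lesa\nu^\alpha\|P_{\approx\nu}f\|_{L^2}\big\||\nabla|^{-\alpha}g\big\|_{L^\infty}$, and the $\nu^s$-weighted square sum in $\nu$ is $\lesa\|f\|_{\dot H^{s+\alpha}}\big\||\nabla|^{-\alpha}g\big\|_{L^\infty}$, the second term.

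The remaining piece $\Pi_3$ is the main point, and the expected obstacle. Fixing the output scale $\nu$, only $\lambda\gtrsim\nu$ contribute, and putting $f$ in $L^\infty$ and $g$ in $\dot H^s$ gives
$$ \|P_\nu\Pi_3\|_{L^2}\lesa\sum_{\lambda\gtrsim\nu}\|P_\lambda f\|_{L^\infty}\|P_{\approx\lambda}g\|_{L^2}\lesa\|f\|_{L^\infty}\sum_{\lambda\gtrsim\nu}\|P_{\approx\lambda}g\|_{L^2}. $$
Setting $c_\lambda=\lambda^s\|P_\lambda g\|_{L^2}$, so $(c_\lambda)\in\ell^2$ with $\|(c_\lambda)\|_{\ell^2}\approx\|g\|_{\dot H^s}$, I obtain $\nu^s\|P_\nu\Pi_3\|_{L^2}\lesa\|f\|_{L^\infty}\sum_{\lambda\gtrsim\nu}(\nu/\lambda)^s c_\lambda$; since $s>0$ the right-hand side is, up to an irrelevant index shift, the convolution of $(c_\lambda)$ with a fixed $\ell^1$ kernel --- concretely a geometric series of ratio $2^{-s}<1$ --- so Young's inequality for discrete convolutions bounds its $\ell^2_\nu$ norm by $\|f\|_{L^\infty}\|(c_\lambda)\|_{\ell^2}\approx\|f\|_{L^\infty}\|g\|_{\dot H^s}$. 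Adding the three contributions yields \eref{eqn mod product rule}. The crude bound $\|P_\lambda f\|_{L^\infty}\les\|f\|_{L^\infty}$ used here is lossy, and summability of the resonant sum is recovered only through the geometric factor $(\nu/\lambda)^s$, which is precisely why the positivity $s>0$ cannot be dropped from this argument.
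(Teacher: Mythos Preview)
Your argument is correct and matches the paper's proof essentially step for step: the same paraproduct decomposition, the same Bernstein-type bound $\|P_{<\nu}g\|_{L^\infty}\lesa\nu^\alpha\||\nabla|^{-\alpha}g\|_{L^\infty}$ (justified by the $L^1$ bound on the kernel of $|\nabla|^\alpha$ times a smooth cutoff, which is where $\alpha\g0$ enters), and the same use of $s>0$ to sum the high--high interaction. The only cosmetic difference is that the paper handles the high--high sum by a relabeling $\lambda\mapsto\lambda\mu$ followed by Minkowski and a direct geometric sum in $\lambda$, whereas you phrase the identical computation as Young's inequality for the discrete convolution with the $\ell^1$ kernel $(2^{-js})_{j\g0}$.
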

\begin{proof}
  See the Appendix. %\textbf{Would be nice if we could find a reference for this? there must be a proof somewhere...}
\end{proof}
  The intuition here is that when $g$ is higher frequency than $f$, we should have $|\nabla|^s(fg) \approx f |\nabla|^s g$, which is essentially the first term. On the other hand, when $f$ is higher frequency than $g$, we should be able to shift derivatives from $g$ onto $f$, or $f |\nabla|^\alpha g \lesa (|\nabla|^\alpha f) g$, since it is much worse to have a derivative fall on a high frequency piece rather than a low frequency term. To make this more precise requires a straightforward application of Littlewood-Paley theory. \\

Fix $T>0$. The proof of Theorem \ref{main thm intro} will proceed by the standard iteration argument using the Duhamel norm
        $$ \| \psi \|_{Y^s_T} = \| \psi \|_{L^\infty_t H^s_x(I \times \RR^2)} + \| \gamma^\mu \p_\mu \psi \|_{L^1_t H^s_x(I \times \RR^2)}$$
where $I=[0, T]$. It is easy to see that we have the energy inequality
        $$ \| \psi \|_{Y^s_T} \lesa \| \psi(0) \|_{H^s_x} + \| \gamma^\mu \p_\mu \psi \|_{L^1_t H^s_x(I\times \RR^2)}.$$
Moreover we have the following version of the transference principle.

\begin{lemma}
  \label{lem trans princ}
Let $s \in \RR$ and $1\les q, r \les \infty$. Suppose that we have
        \begin{equation}\label{lem trans princ homogeneous est} \big\| M\big( e^{\pm i t |\nabla|} f \big) \big\|_{L^q_t L^r_x(I \times \RR^2)} \lesa \| f \|_{H^s} \end{equation}
for any $f \in H^s$ where $M$ is a Fourier multiplier acting only on the spatial variable $x \in \RR^2$.
Then for any $\psi \in Y^s_T$ we have
        $$ \| M \psi \|_{L^q_t L^r_x(I \times \RR^2)} \lesa  \| \psi \|_{Y^s_T}.$$
\begin{proof}
  Let $U(t)f$ denote the solution operator for the Dirac equation $  i \gamma^\mu \p_\mu \psi = 0$  with initial data $\psi(0) = f$. An  easy computation shows that $U(t-s) = U(t) U(s)$ and
        $$ U(t) = e^{ i t |\nabla|} L_+  + e^{- i t |\nabla|} L_-$$
  where $L_\pm$ are bounded, time-independent, Fourier multipliers on $H^s$ for all $s \in \RR$. Now
  given any $\psi \in L^\infty_t H^s_x(I\times \RR^2)$ we can write
        $$ \psi = U(t) \psi(0) + \int_0^t  U(t-s)F(s) ds $$
  where $F(s) = i \gamma^\mu\p_\mu \psi$.   Hence using (\ref{lem trans princ homogeneous est}) we obtain
        \begin{align*}
          \| M \psi \|_{L^q_t L^r_x(I \times \RR^2)} &\lesa \sum_{\pm}\big\| M\big( e^{\pm i t |\nabla|} L_\pm \psi(0)\big) \big\|_{L^q_t L^r_x(I \times \RR^2)} + \int_0^T \big\| M \big( e^{\pm i(t-s) |\nabla|}   L_\pm F(s)\big) \big\|_{L^q_t L^r_x(I \times \RR^2)} ds \\
          &\lesa \sum_{\pm} \| L_\pm \psi(0) \|_{H^s_x} + \int_0^T \| L_\pm F(s) \|_{H^s_x} ds \\
          &\les \| \psi \|_{Y^s_T}.
        \end{align*}
\end{proof}
\end{lemma}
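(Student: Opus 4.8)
The plan is to reduce the bound for a general $\psi\in Y^s_T$ to the hypothesised estimate \eref{lem trans princ homogeneous est} via Duhamel's formula for the Dirac equation, after first diagonalising the free flow into two half-wave propagators.

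I would begin with the linear algebra. Multiplying $i\gamma^\mu\p_\mu\psi=0$ by $-i\gamma^0$ and using $(\gamma^0)^2=I_{2\times2}$ rewrites the free Dirac equation as $\p_t\psi=-\gamma^0\gamma^j\p_j\psi$. The spatial symbol $\gamma^0\gamma^j\xi_j$ squares to $|\xi|^2 I_{2\times2}$ thanks to the anticommutation relations, so it has spectral projections $\Pi_\pm(\xi)=\tfrac12\big(I_{2\times2}\pm|\xi|^{-1}\gamma^0\gamma^j\xi_j\big)$; these are homogeneous of degree zero, hence define bounded, time-independent Fourier multipliers $L_\pm$ on $H^s$ for every $s\in\RR$, and the free propagator is $U(t)=e^{it|\nabla|}L_+ + e^{-it|\nabla|}L_-$ with the group law $U(t)U(\sigma)=U(t+\sigma)$.

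Next, given $\psi\in Y^s_T$, set $F=i\gamma^\mu\p_\mu\psi$, which lies in $L^1_tH^s_x(I\times\RR^2)$ by definition of the $Y^s_T$ norm, and write Duhamel's formula
        $$ \psi(t)=U(t)\psi(0)+\int_0^t U(t-\sigma)\big({-i\gamma^0}F(\sigma)\big)\,d\sigma, $$
the constant matrix $-i\gamma^0$ being harmless as a bounded multiplier on $H^s$. Applying $M$ and the triangle inequality splits $\|M\psi\|_{L^q_tL^r_x(I\times\RR^2)}$ into a homogeneous part and a Duhamel part. For the homogeneous part, $\|MU(t)\psi(0)\|_{L^q_tL^r_x(I\times\RR^2)}\lesa\sum_\pm\big\|M\big(e^{\pm it|\nabla|}L_\pm\psi(0)\big)\big\|_{L^q_tL^r_x(I\times\RR^2)}\lesa\sum_\pm\|L_\pm\psi(0)\|_{H^s}\lesa\|\psi(0)\|_{H^s}\le\|\psi\|_{Y^s_T}$ is immediate from \eref{lem trans princ homogeneous est}. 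For the Duhamel part, I would replace $\int_0^t$ by $\int_0^T\ind_{\{\sigma\le t\}}$, use Minkowski's integral inequality to pull the $L^q_tL^r_x$ norm inside the $\sigma$-integral, and for each fixed $\sigma$ estimate $\|MU(t-\sigma)\big({-i\gamma^0}F(\sigma)\big)\|_{L^q_tL^r_x(I\times\RR^2)}$ by substituting $\tau=t-\sigma$ and invoking \eref{lem trans princ homogeneous est}, obtaining a bound $\lesa\|F(\sigma)\|_{H^s_x}$. Integrating in $\sigma$ gives $\lesa\|F\|_{L^1_tH^s_x(I\times\RR^2)}\le\|\psi\|_{Y^s_T}$, and adding the two pieces completes the argument.

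I do not expect a serious obstacle here—this is a standard transference argument—but the one point needing attention is the time translation in the Duhamel step: \eref{lem trans princ homogeneous est} is stated on the fixed interval $I=[0,T]$, whereas after $\tau=t-\sigma$ one needs it on the translates $I-\sigma\subset[-T,T]$. This causes no trouble because the estimates to which the lemma is applied have implied constants depending only on $|I|$ (as Corollary \ref{Cor estimate for B} makes explicit), so one may either quote the $I$-uniform version or simply enlarge $I$ at the outset.
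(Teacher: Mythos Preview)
Your proposal is correct and follows essentially the same route as the paper: diagonalise the free Dirac flow as $U(t)=e^{it|\nabla|}L_+ + e^{-it|\nabla|}L_-$, apply Duhamel's formula, and reduce both the homogeneous and inhomogeneous pieces to the assumed estimate \eref{lem trans princ homogeneous est} via Minkowski's inequality. You are in fact slightly more careful than the paper in two places---making explicit the $-i\gamma^0$ factor needed in the Duhamel integrand and flagging the time-translation issue in the inhomogeneous term---but neither changes the argument.
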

\begin{remark}
  A similar argument shows that a multi-linear version of Lemma \ref{lem trans princ} also holds. Thus an estimate of the form
            $$ \|M\big( e^{ \pm_1 i  t |\nabla|} f_1, ... , e^{\pm_m i t|\nabla|} f_m \big)\|_{L^q_t L^r_x(I \times \RR^2)} \lesa \Pi_{j=1}^m \| f_j \|_{H^s_x}$$
  immediately implies that
             $$ \|M \big(\psi_1, ... , \psi_m \big)\|_{L^q_t L^r_x(I \times \RR^2)} \lesa \Pi_{j=1}^m \| \psi_j \|_{Y^s_T}.$$
\end{remark}

 We now come to the proof of local well-posedness for the cubic Dirac equation (\ref{CSD schematically}). A standard iteration argument using the energy inequality, followed by Holder in time, and Lemma \ref{lem trans princ}, shows that to prove local well-posedness for (\ref{CSD schematically}) it is enough to prove the estimate
        \begin{equation} \label{eqn main homogeneous est} \| N(\psi_1, \psi_2) \psi_3 \|_{L^2_t H^s_x (I \times \RR^2)} \lesa_I \Pi_{j=1}^3\| f_j\|_{H^s_x} \end{equation}
where we assume $\psi_j = e^{\pm_j i t |\nabla|} f_j$ is a homogeneous wave with data $f_j \in H^s_x$. To prove (\ref{eqn main homogeneous est}) we start by considering the low frequency case $|\xi|<1$. Then by Corollary \ref{Cor estimate for B}  we obtain
    $$ \| P_{\les 1} N( \psi_1, \psi_2) \psi_3 \|_{L^2_t H^s_x} \lesa \| N( \psi_1, \psi_2) \psi_3 \|_{L^2_{t, x}} \lesa \| N( \psi_1, \psi_2) \|_{L^2_t L^\infty_x} \| \psi_3 \|_{L^\infty_t L^2_x} \lesa \| f_1 \|_{H^s_x} \| f_2 \|_{H^s_x} \| f_3 \|_{H^s_x}.$$
We can now replace the $H^s_x$ norm on the left hand side of (\ref{eqn main homogeneous est}) with the homogeneous version $\dot{H}^s$ and hence, via an application of the Sobolev product rule in Proposition \ref{prop modified product rule} (with $\alpha = \frac{1}{2}$),
we deduce that
    \begin{align*} \| N( \psi_1, \psi_2) \psi_3 \|_{L^2_t \dot{H}^s_x (I \times \RR^2)}
            &\lesa  \big\|  N( \psi_1, \psi_2) \big\|_{L^2_t L^\infty_x(I \times \RR^2)} \big\| |\nabla|^{s} \psi_3 \|_{L^\infty_t L^2_x (I \times \RR^2)} \\
            & \qquad \qquad \qquad +\big\| |\nabla|^{s+\frac{1}{2}} N( \psi_1, \psi_2) \big\|_{L^4_t L^2_x(I \times \RR^2)} \big\| |\nabla|^{-\frac{1}{2}} \psi_3 \|_{L^4_t L^\infty_x (I \times \RR^2)}\\
            &\lesa \| f_1 \|_{H^s_x} \| f_2 \|_{H^s_x} \| f_3 \|_{H^s_x}
    \end{align*}
where we used the bilinear estimates in Corollary \ref{Cor estimate for B}, together with the linear $L^4_t L^\infty_x$ Strichartz estimate. \\

 To complete the proof of Theorem \ref{main thm intro}, it only remains to reconstruct the gauge $A_\mu$ by using (\ref{eqn reconstruct gauge}). To compute the correct regularity for the gauge, note that from (\ref{eqn reconstruct gauge}) we have
        $$ \| A_\mu \|_{L^\infty_t \dot{H}^r} \lesa \| \psi^2 \|_{L^\infty_t \dot{H}^{r-1}}$$
and since we are assuming the spinor $\psi \in L^\infty_t H^s_x$ we need the product estimate
            \begin{equation}\label{eqn product est in hom sobolev norms} \| \psi^2 \|_{\dot{H}^{r-1}} \lesa \| \psi\|_{\dot{H}^s}^2. \end{equation}
The required conditions for product estimates in $\dot{H}^s$ to hold, are given by the following.

\begin{proposition}\label{prop homogeneous product est in sobolev spaces}
  Assume $s_1 + s_2 + s_3 = \frac{n}{2}$ with $s_j + s_k >0$ for $j\not= k$. Then
        $$ \| f g \|_{\dot{H}^{-s_1}(\RR^n)} \lesa \| f \|_{\dot{H}^{s_2}(\RR^n)} \| g \|_{\dot{H}^{s_3}(\RR^n)}.$$
\end{proposition}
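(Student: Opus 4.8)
The plan is to use a Littlewood-Paley decomposition and split the product $fg$ into the three standard paraproduct pieces: high-low, low-high, and high-high interactions. Write $f = \sum_\mu P_\mu f$ and $g = \sum_\nu P_\nu g$ over dyadic $\mu,\nu \in 2^{\ZZ}$, so that $fg = \sum_{\mu,\nu} (P_\mu f)(P_\nu g)$. After applying $P_\lambda$ to the product, only certain ranges of $\mu,\nu$ contribute: in the high-low and low-high regimes one of the factors has frequency $\approx \lambda$ and the other has frequency $\lesa \lambda$, while in the high-high regime $\mu \approx \nu \gtrsim \lambda$. I would estimate the $\dot{H}^{-s_1}$ norm of $fg$ by $\big(\sum_\lambda \lambda^{-2s_1} \| P_\lambda (fg)\|_{L^2}^2\big)^{1/2}$ and bound each of the three contributions separately.

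For the high-low piece, where $g$ is at frequency $\approx \lambda$ and $f$ is at frequency $\lesa \lambda$, I would use H\"older with $L^2 \cdot L^\infty$, putting $P_\nu g$ in $L^2$ and $P_{<\lambda} f$ in $L^\infty$; the latter is controlled by $\sum_{\mu \lesa \lambda} \mu^{n/2} \| P_\mu f\|_{L^2} \cdot \mu^{-s_2}\mu^{s_2}$-type bookkeeping, using Bernstein to pass from $L^2$ to $L^\infty$ at the cost of $\mu^{n/2}$. Since $s_2 + s_3 > 0$ one has $s_2 > -s_3 = s_1 + s_2 - n/2$, equivalently $n/2 - s_3 > 0$... more cleanly: the relevant summability exponent is $n/2 - s_3 + s_1 = s_2$ wait — the exponent governing the sum over $\mu$ in the low factor is $n/2 - (\text{something})$, and the condition that makes the geometric series converge is exactly $s_1 + s_2 > 0$ (and $s_3 < n/2$, which follows since $s_1 + s_2 > 0$). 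The low-high piece is symmetric, swapping the roles of $f$ and $g$, and converges under $s_1 + s_3 > 0$. For the high-high piece, where $\mu \approx \nu \gtrsim \lambda$, I would again use H\"older and Bernstein, now summing over the diagonal $\mu \approx \nu$; the scaling condition $s_1 + s_2 + s_3 = n/2$ is precisely what is needed here, together with $s_2 + s_3 > 0$, to sum the geometric series in $\mu$ down to the output frequency $\lambda$ and then in $\lambda$. In each case the final step is a Schur-test / Young's-inequality argument: one has an $\ell^2$-in-$\lambda$ sum of convolutions of $\ell^2$ sequences with a summable geometric kernel, which is bounded by the product of the $\ell^2$ norms, i.e. by $\|f\|_{\dot H^{s_2}}\|g\|_{\dot H^{s_3}}$.

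The main obstacle, and the only place one must be careful, is the high-high interaction term: here the output frequency $\lambda$ can be much smaller than the frequencies $\mu$ of the two inputs, so the negative regularity index $-s_1$ on the output does not directly help control the sum. One must genuinely use the scaling identity $s_1+s_2+s_3 = n/2$ to convert the frequency gain from the Bernstein inequality (which produces a power $\lambda^{n/2}\mu^{-n/2}$ after summing over the $O(1)$-many $\nu \approx \mu$ and using $L^1 \hookrightarrow \dot H^{-n/2}$-type embedding, or equivalently $L^2\cdot L^2 \to L^1$ and then $L^1 \to \dot H^{-s_1}$ via Bernstein since $-s_1 \le 0$ is not automatic — if $s_1 < 0$ one instead keeps things in $L^2$) into something summable, and the hypothesis $s_2+s_3>0$ guarantees the resulting geometric series in $\mu \gtrsim \lambda$ converges. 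A clean way to organize this is: for fixed $\lambda$, $\| P_\lambda(fg)\|_{L^2} \lesa \lambda^{n/2} \sum_{\mu \gtrsim \lambda} \|P_\mu f\|_{L^2}\|P_\mu g\|_{L^2}$ by Bernstein and Cauchy-Schwarz in the frequency annuli, then multiply by $\lambda^{-s_1}$, sum in $\ell^2_\lambda$, and apply Schur's test with kernel $\lambda^{n/2 - s_1} \mu^{-n/2} \mathbf{1}_{\mu \gtrsim \lambda} = (\lambda/\mu)^{n/2-s_1}\mu^{-s_1-\ldots}$; the exponents work out using the scaling relation and $s_2 + s_3>0$. The low-frequency-output cases for the paraproduct pieces are handled identically with the roles of the indices rotated, and $n/2$ being the total homogeneity ensures there is no logarithmic divergence at any endpoint as long as all three pairwise sums $s_j + s_k$ are strictly positive.
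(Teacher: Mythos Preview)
Your paraproduct approach is correct in outline and is one of the standard ways to prove this estimate; the three regimes are exactly as you say, and the three pairwise conditions $s_j+s_k>0$ are precisely what make the respective geometric sums converge. The write-up is rough in places (the ``wait ---'' and trailing exponent computations should be cleaned up before this could stand as a proof), but the skeleton is right and can be completed without difficulty.

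That said, the paper takes a different and much shorter route. It simply omits the proof of the general statement as well known, and for the only case actually used---namely $s_1=1-2s$, $s_2=s_3=s$ with $0<s<\tfrac12$---it gives a three-line argument via Sobolev embedding and H\"older:
\[
\|fg\|_{\dot H^{2s-1}} \lesa \|fg\|_{L^p} \lesa \|f\|_{L^q}\|g\|_{L^q} \lesa \|f\|_{\dot H^s}\|g\|_{\dot H^s},
\]
with $\tfrac1p=\tfrac12+\tfrac{1-2s}{2}$ and $\tfrac1p=\tfrac2q$. So the contrast is: your Littlewood--Paley argument proves the full proposition (including the cases where one $s_j$ may be negative, which the pure Sobolev-embedding route does not cover directly), at the cost of a page of dyadic bookkeeping; the paper's argument is essentially immediate but only handles the special case needed for the application, where all three indices lie in $(0,\tfrac n2)$ and everything reduces to H\"older between Lebesgue spaces.
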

  We omit the proof of Proposition \ref{prop homogeneous product est in sobolev spaces} since it is well known. However for the special case that we use below, namely $s_1 = 1-2s$, $s_2=s_3=s$, we note that, provided $0<s<\frac{1}{2}$, the estimate follows by a simple application of Sobolev embedding
        $$ \| fg \|_{\dot{H}^{2s -1}} \lesa \| fg \|_{L^p} \lesa \| f \|_{L^q} \| g \|_{L^q} \lesa \| f \|_{\dot{H}^s} \| g \|_{\dot{H}^s}$$
  where $\frac{1}{p} = \frac{1}{2} + \frac{1-2s}{2}$ and $\frac{1}{p} = \frac{2}{q}$. \\

It we now return to estimating the gauge $A_\mu$, we observe that if we want to put $A_\mu \in L^\infty_t \dot{H}^r$, in light of (\ref{eqn product est in hom sobolev norms}) and Proposition \ref{prop homogeneous product est in sobolev spaces},  we need
        $$ r-1 + 1 = 2s, \qquad \Longrightarrow \qquad r=2s$$
and consequently the correct regularity for the gauge is $A_\mu \in L^\infty_t \dot{H}^{2s}$. Note that this required the assumption $s<1$, if $s\g 1$, then the same argument puts the gauge $A_\mu \in \dot{H}^r$ for $0<r<s+1$. \\

\section*{Appendix - Proof of Proposition \ref{prop modified product rule}}\label{sec proof of product rule}

\begin{proof} The first step is to write
        $$ fg \approx \sum_{\lambda} f_\lambda g_{\ll \lambda} + \sum_\lambda f_\lambda g_\lambda + \sum_{\lambda} f_{\ll \lambda} g_\lambda$$
  where $f_\lambda = P_\lambda f$ (with $\lambda \in  2^\ZZ$) and $f_{\ll \lambda} = \sum_{\mu \ll \lambda} P_\mu f$. Note that we can write $f_{\ll \lambda} = \phi_\lambda * f$ with $\phi \in \s$, $\supp \phi \subset \{ |\xi| \les 2\}$ and $\phi_\lambda(x) = \lambda^2 \phi(\lambda x)$.  \\

  For the high-low piece, we use the fact that the Fourier support of $f_\lambda g_{\ll \lambda}$ is contained inside the annulus $|\xi| \approx \lambda$ and so
      \begin{align*}
        \| \sum_\lambda f_\lambda g_{\ll \lambda} \|_{\dot{H}^s}^2 \approx \sum_\lambda \big(\lambda^s \| f_\lambda g_{\ll \lambda} \|_{L^2} \big)^2\lesa \sum_\lambda \lambda^{2s} \| f_\lambda\|_{L^2}^2 \| g_{\ll \lambda } \|_{L^\infty}^2.
        \end{align*}
  Now we observe that\footnote{We use the fact that $|\nabla|^\alpha \phi \in L^1$ provided $\alpha \g 0$. This is obvious in the case $\alpha=0$. On the other hand if $\alpha>0$, by Holder's inequality, followed by  the  Hausdorff-Young inequality,
    $$ \| |\nabla|^\alpha \phi \|_{L^1_x} \lesa \sum_{|\kappa|\les n} \|x^{\kappa} ( |\nabla|^\alpha \phi) \|_{L^q_x} \lesa \sum_{|\kappa|\les n} \| \p_\xi^\kappa( |\xi|^\alpha \widehat{\phi}(\xi) ) \|_{L^{q'}_\xi}$$
  where we are free to choose any $2<q<\infty$. Thus  it suffices to prove $\p_\xi^\kappa ( |\xi|^{\alpha} \widehat{\phi}(\xi) ) \in L^{q'}_\xi$ for $|\kappa|\les n$. Now using the fact that $\widehat{\phi} \in C^\infty_0$ and $\big|\p^\kappa |\xi|^\alpha \big| \lesa |\xi|^{\alpha-n}$, we see that we require $|\xi|^{\alpha - n} \in L^{q'}(|\xi| \les 1)$ which holds provided $(\alpha - n) q' >-n$. Rearranging we obtain $\alpha > n( 1- \frac{1}{q'}) = \frac{n}{q}$ which holds provided $q$ is sufficiently large.}
        \begin{align*} | g_{\ll \lambda}| &= | \big( |\nabla|^\alpha \phi_\lambda\big) * \big(  |\nabla|^{-\alpha} g \big)|\\
          &\lesa \big\|  |\nabla|^{\alpha} \phi_\lambda \big\|_{L^1} \big\|  |\nabla|^{-\alpha} g \big\|_{L^\infty} \\
          &\approx \lambda^\alpha \big\|  |\nabla|^{-\alpha} g \big\|_{L^\infty}
        \end{align*}
  and consequently
    \begin{align*}
      \| \sum_\lambda f_\lambda g_{\ll \lambda} \|_{H^s} &\lesa \Big( \sum_\lambda \lambda^{2s} \| f_\lambda \|_{L^2}^2 \| g_{\ll \lambda} \|_{L^\infty}^2 \Big)^{\frac{1}{2}} \\
      &\lesa \big\||\nabla|^{-\alpha}  g\big\|_{L^\infty } \Big( \sum_\lambda \lambda^{2(s+\alpha)} \| f_\lambda\|_{L^2}^2 \Big)^{\frac{1}{2}} \\
      &\lesa \| f\|_{\dot{H}^{s+\alpha}} \big\| |\nabla|^{-\alpha} g \big\|_{L^\infty}.
    \end{align*}
  The low-high piece follows an identical argument, essentially just repeat the previous reasoning but replace $f$ with $g$, $g$ with $f$ and take $\alpha=0$.

  Thus it only remains to deal with the high-high case. The key trick is to write
            $$ \sum_{\lambda} P_\mu\big( f_\lambda g_\lambda \big) = \sum_{\lambda \g \frac{\mu}{4}} P_\mu\big( f_\lambda g_\lambda\big)
                            = \sum_{\lambda\g \frac{1}{4} } P_\mu\big( f_{\lambda \mu} g_{\lambda \mu} \big)$$
  where to obtain the last equality we just relabeled our sequence to start at $\frac{1}{4}$ instead of $\mu$ ($\in 2^{\ZZ}$). Now using a similar argument to before
    \begin{align*}
        \| \sum_\lambda f_\lambda g_\lambda \|_{H^s} &= \Big( \sum_\mu \mu^{2s} \| \sum_\lambda P_\mu( f_\lambda g_\lambda ) \|_{L^2}^2\Big)^{\frac{1}{2}} \\
        &\lesa \sum_{\lambda \g 1} \Big( \sum_\mu \mu^{2s}  \| f_{\lambda \mu} g_{\lambda \mu} \|_{L^2}^2\Big)^{\frac{1}{2}} \\
        &\lesa \| f \|_{L^\infty} \sum_{\lambda \g 1} \Big( \sum_{\mu} \mu^{2s} \| g_{\lambda \mu} \|_{L^2}^2 \Big)^{\frac{1}{2}}\\
        &\lesa   \| f \|_{L^\infty} \sum_{\lambda \g 1} \lambda^{-s}  \Big( \sum_{\mu} ( \lambda \mu)^{2s} \| g_{\lambda \mu} \|_{L^2}^2 \Big)^{\frac{1}{2}}\\
        &\lesa \| f \|_{L^\infty} \| g \|_{\dot{H}^s}
    \end{align*}
  where the last line follows by again relabeling the sequence. \\

\end{proof}

\bibliographystyle{amsplain}
\providecommand{\bysame}{\leavevmode\hbox to3em{\hrulefill}\thinspace}
\providecommand{\MR}{\relax\ifhmode\unskip\space\fi MR }
% \MRhref is called by the amsart/book/proc definition of \MR.
\providecommand{\MRhref}[2]{%
  \href{http://www.ams.org/mathscinet-getitem?mr=#1}{#2}
}
\providecommand{\href}[2]{#2}

\end{document}